\renewcommand\nomgroup[1]{%
  \item[\bfseries
  \ifstrequal{#1}{A}{Sets}{%
  \ifstrequal{#1}{B}{Variables}{%
  \ifstrequal{#1}{C}{Parameters}{}}}%
]}
\tikzstyle{startstop} = [rectangle, rounded corners, minimum width=3cm, minimum height=1cm,text centered, draw=black, fill=red!30]
\tikzstyle{io} = [trapezium, trapezium left angle=70, trapezium right angle=110, minimum width=3cm, minimum height=1cm, text centered, draw=black, fill=blue!30]
\tikzstyle{process} = [rectangle, minimum width=2cm, minimum height=0.5cm, text centered, text width=4cm, draw=black, fill=orange!30]
\tikzstyle{block} = [rectangle, draw, fill=orange!30, text width=8em, text centered, rounded corners, minimum height=2em]
\tikzstyle{decision} = [diamond, minimum width=2cm, minimum height=1cm, text centered, text width=2cm, draw=black, fill=green!30]
\tikzstyle{arrow} = [thick,->,>=stealth]
\tikzstyle{vertex}=[circle,fill=black!25,minimum size=20pt,inner sep=0pt]
\tikzstyle{selected vertex} = [vertex, fill=red!24]
\tikzstyle{edge} = [draw,thick,->]
\tikzstyle{weight} = [font=\small]
\tikzstyle{selected edge} = [draw,line width=5pt,-,red!50]
\tikzstyle{ignored edge} = [draw,line width=5pt,-,black!20]
\newtheorem{Proposition}{Proposition}
\numberwithin{Theorem}{section}
\numberwithin{Definition}{section}
\numberwithin{Lemma}{section}
\numberwithin{Algorithm}{section}
\numberwithin{equation}{section}
\title{Data-driven Heuristics for DC optimal transmission switching problem}
\author[1]{Juncheng Li\thanks{Email: j.li83@lancaster.ac.uk}}
\author[2]{Trivikram Dokka\thanks{Email: t.dokka@qub.ac.uk}}
\author[1]{Guglielmo Lulli\thanks{Email: g.lulli@lancaster.ac.uk}}
\author[3]{Fabrizio Lacalandra\thanks{Email: flacalandra@arera.it}}
\affil[1]{Department of Management Science, Lancaster University, United Kingdom}
\affil[2]{Department of Management, Queens University Belfast, United Kingdom}
\affil[3]{The Italian Regulatory Authority for Energy, Networks and Environment (ARERA), Italy}
\begin{document}
\maketitle

\begin{abstract}

The goal of Optimal Transmission Switching (OTS) problem for power systems is to identify a topology of the power grid that minimizes the cost of the system operation while satisfying the operational and physical constraints. Among the most popular methods to solve OTS is to construct approximation via integer linear programming formulations, which often come with big-M inequalities. These big-M inequalities increase, considerably, the difficulty of solving the resulting formulations. Moreover, choosing big-M values optimally is as hard as solving OTS itself. In this paper, we devise two data-driven big-M bound strengthening methods which take network structure, power demands and generation costs into account. We illustrate the robustness of our methods to load changes and impressive runtime improvements of mixed-integer solvers achieved by our methods with extensive experiments on benchmark instances.  The speedup by one of the proposed methods is almost 13 times with respect to the exact method.
\end{abstract}
\newpage

\mbox{}
\nomenclature[A]{$V$}{set of buses in the network}
\nomenclature[A]{$E$}{set of transmission lines in the network}
\nomenclature[A]{$G$}{set of generators}

\nomenclature[B]{$p$}{active power production}
\nomenclature[B]{$\theta$}{voltage angle at bus}
\nomenclature[B]{$f$}{active power flow across transmission line }
\nomenclature[B]{$x$}{binary variable representing on/off status of transmission line }
\nomenclature[C]{$c$}{marginal cost of power production}
\nomenclature[C]{$M$}{big-M value}
\nomenclature[C]{$d$}{demand at bus}

\nomenclature[C]{$B$}{susceptance of transmission line}
\nomenclature[C]{$\bar{f}$}{maximum capacity of transmission line }

\section{Introduction}
The economical dispatch of electric power on grids has been attracting the attention of both academia and Operations Research (OR) practitioners in the energy sector for decades. The optimal power flow (OPF) problem is to determine a minimum-cost delivery scheme of electric power to meet power demand subject to physical laws - such as Kirchhoff's Laws and Ohm's Laws - as well as other operational limits imposed by the grid. The OPF problem is a non-convex quadratic constrained quadratic programming problem first proposed by Carpentier \cite{carpentier1979optimal} and studied since then. While the OPF problem treats the configuration of the grid as fixed, it is now a common practice to temporarily switch off some branches in specific operating hours \cite{hedman2011review}.  

Economic benefits from switching off branches of a grid can be tremendous, as it enables system operators to  lower significantly the overall production costs and/or balance the load on the grid \cite{fisher2008optimal, hedman2008optimal, hedman2011optimal}. Indeed, the optimal unit commitment schedule can change by modifying the grid topology \cite{hedman2010co}. Equally important, is that the use of switching in grids may facilitate the integration of renewable power into the system  \cite{shi2017stochastic}. The problem of finding the optimal grid topology, i.e., the topology that allows meeting the energy demand at the lowest generation costs, is known in the literature as Optimal Transmission Switching (OTS). The name derives from the fact that the optimal topology is obtained by switching off some branches. From the mathematical modelling point of view, the OTS problem is formulated by adding the switching component of branches to the OPF model. This leads to 
a mixed integer non-convex nonlinear programming formulation that poses an extraordinary computational challenge. Under some realistic operational assumptions, the OTS formulation is often approximated as a mixed integer linear program (MILP) with big-M parameters  \cite{fisher2008optimal}, to take advantage of mature MIP solvers. Such a MILP approximation - which is still NP-hard \cite{lehmann2014complexity}\cite{kocuk2016} - is referred in the literature as the DC OTS problem.  Given the computational complexity of the problem, a  number of heuristic methods have been proposed in the literature. Fuller et al. \cite{fuller2012fast} presented two greedy algorithms that iteratively switch off branches guided by linear programming duality information. Ruiz et al. \cite{ruiz2012tractable} analyzed three different policies to remove braches in a greedy type algorithm.   Prescreening heuristics to reduce the set of switchable arcs according to properties of the electric power system were proposed in \cite{barrows2012computationally, liu2012heuristic, wu2013selection}. Johnson et al. \cite{johnson2020k} studied a k-nearest-neighbor clustering approach to compute DC OTS solution making use of historical information. On the other side of the methodological spectrum, i.e, exact methods, Kocuk et al.\cite{kocuk2016} gave a polyhedral study of the problem and proposed cycle-based valid inequalities to strengthen the MILP formulation. 

In this work, we propose to compute solutions of the OTS problem using MIP solvers. However, to exploit the capabilities of these solvers, the values of the big M parameters have to be carefully set; and this is the objective of this paper. In fact, while small big-M parameters result in strong linear relaxation that speeds up solution procedure in MIP solvers, undersized ones can cut off the optimal solution. Binato et al. \cite{binato2001new} observed that elementary weighted longest paths between the endpoints of each branch of the grid can be used to define the big-M parameters for the grid expansion planning problem - a problem that shares similar characteristics with DC OTS. Indeed, we show in \S \ref{sec::bigM} that big-M parameters determined by elementary weighted longest paths reserves at least one optimal solution of the DC OTS problem. However, longest-path big-M parameters may be too conservative (big) because they preserve all the feasible region without taking any power demand and production cost into account. This motivates the development of methods to compute smaller big-M parameters. However, the problem of computing the smallest big-M parameters that do not cut off any feasible solution of the DC OTS problem is NP-hard and it is as difficult as solving the DC OTS problem, see  \cite{fattahi2018bound}. This prompts the development of heuristic methods to compute big-M parameters for DC OTS problem, which is absent in the existing literature. In this work, we propose two polynomial-time heuristic methods to compute big-M parameters for DC OTS problem by solving a sequence of linear programming problems.
The rest of the paper is organized as follows: \S \ref{sec::bigM} provides a brief description of DC OTS problem as well as an example motivating the use of input data to compute small big-M values. \S \ref{sec::Heuristic} illustrates the 2 proposed heuristic methods to compute big-M values. \S \ref{sec::Comp} shows computational experiments comparing the proposed heuristic methods and the exact method. \S \ref{sec::Conc} draws the conclusion of this work and mentions possible future research.

\section{Big-M values and motivating example}
\label{sec::bigM}

For the sake of completeness, we first present the mathematical programming formulation (\ref{eq:DC_OTS}) of the DC OTS problem. The interested reader may refer to \cite{fisher2008optimal} for further details.
The grid is represented by a graph $D = (N,E)$, where $N$ denotes the set of buses (nodes) and $E$ denotes the set of branches (edges) linking the buses. $G \subset N$ is the set of buses connected to power generators. Let the positive $f_{ij}$ denote the power flow from bus $i$ to bus $j$. The negative value of  $f_{ij}$ denotes the power flow in the opposite direction (from bus $j$ to bus $i$).

\begin{subequations} \label{eq:DC_OTS}
\begin{align}
    \min_{p,f,\theta, x} \quad & \sum_{i \in G}p_i \cdot c_i \label{eq:DC_OTS_obj}\\
    \textrm{s.t.}\quad & \sum_{j \in V}f_{ji} - \sum_{j \in V}f_{ij} = d_i, \; \forall i \in N\setminus G \label{eq:DC_OTS_power_balance}\\
    & \sum_{j \in V}f_{ji} - \sum_{j \in V}f_{ij} + p_i = d_i, \;\forall i \in G \label{eq:DC_OTS_power_balance_gen}\\
    & f_{ij} = B_{ij}(\theta_i - \theta_j)\cdot x_{ij}, \;\forall (i,j) \in E  \label{eq:kirchhoff_dc_switch} \\ 
    & -\bar{f_{ij}}\cdot x_{ij} \leq f_{ij} \leq \bar{f_{ij}}\cdot x_{ij}, \; \forall (i,j) \in E \label{eq:DC_OTS_power_flow_limit}\\
    & p_i^{min}\leq p_i \leq p_i^{max}, \; \forall i \in G \label{eq:DC_OTS_power_prod_limit}\\
    & x_{ij} \in \{0,1\}, \; \forall (i,j) \in E. \label{eq:x_bin}
\end{align}
\end{subequations}
The objective function (\ref{eq:DC_OTS_obj})  minimizes the power production costs. Constraints (\ref{eq:DC_OTS_power_balance}) and (\ref{eq:DC_OTS_power_balance_gen}) ensure that net power flow into each bus equals its power demand ($d_i$), where $p_i$ is the power provided by generator {\it i}. (\ref{eq:kirchhoff_dc_switch}) imposes that power flow across branch $(i,j)$ equals the voltage angle difference ($\theta_i - \theta_j$) multiplied by its susceptance ($B_{ij}$), if branch $(i,j)$ is on service (i.e., $x_{ij}=1$. $x_{ij}$ is binary variable representing on/off status of the branch). Constraints (\ref{eq:DC_OTS_power_flow_limit}) and (\ref{eq:DC_OTS_power_prod_limit}) impose limits on power flow ($\bar{f_{ij}}$) on each branch and on power production ($p_i^{min}, p_i^{max}$)  at each bus {\it i} of $G$.
The bilinear constraint (\ref{eq:kirchhoff_dc_switch}) can be linearized with big-M parameters using the following inequalities:
\begin{equation}
    \label{eq:kirchhoff_dc_switch_linearized}
    -(1-x_{ij})M_{ij} \leq f_{ij} - B_{ij}(\theta_i - \theta_j) \leq (1-x_{ij})M_{ij}, \\ \forall (i,j) \in E
\end{equation} 

$M_{ij}$ is the big M value, i.e., a "large enough" number such that constraint (\ref{eq:kirchhoff_dc_switch_linearized}) is redundant when $x_{ij} = 0$.(i.e. implied by other constraints), thus equivalent to non-existent. We here show that $M_{ij}$ computed by elementary weighted longest path from node $i$ to node $j$ does not cut off any feasible solution under realistic assumption.

\begin{Proposition} \label{pro:connectivity_opt}
\cite{kocuk2016} In the DC OTS problem, there exists an optimal solution in which the lines switched on form a connected network.
\end{Proposition}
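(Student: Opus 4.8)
The plan is to use an exchange argument: starting from any optimal solution whose switched-on subgraph is disconnected, switch on one extra branch that carries zero power flow so as to merge two connected components, and then show this changes neither feasibility nor cost. Throughout I assume the grid $D=(N,E)$ is connected; otherwise the statement is read component by component, which is harmless, since the power-balance constraints (\ref{eq:DC_OTS_power_balance})--(\ref{eq:DC_OTS_power_balance_gen}) already force each connected component of $D$ to balance its own generation and demand.

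First I would fix an optimal solution $(p^{*},f^{*},\theta^{*},x^{*})$ and let $H$ be the spanning subgraph of $D$ with edge set $\{(i,j)\in E : x^{*}_{ij}=1\}$. If $H$ is connected there is nothing to prove, so assume it has connected components $C_1,\dots,C_k$ with $k\ge 2$. Contracting each $C_\ell$ to a single vertex turns $D$ into a connected multigraph, so there exists a branch $(i,j)\in E$ whose endpoints lie in two distinct components, say $i\in C_a$ and $j\in C_b$; necessarily $x^{*}_{ij}=0$, since $H$ contains no edge between $C_a$ and $C_b$.

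Next I would construct a new solution that agrees with the old one except that (i) $x_{ij}$ is reset to $1$, (ii) $f_{ij}$ is reset to $0$, and (iii) every angle $\theta_m$ with $m\in C_b$ is increased by the constant $\theta^{*}_i-\theta^{*}_j$. The verification then rests on four simple observations: adding a branch carrying zero flow leaves every nodal balance equation (\ref{eq:DC_OTS_power_balance})--(\ref{eq:DC_OTS_power_balance_gen}) intact; a uniform shift of all angles inside $C_b$ preserves every difference $\theta_m-\theta_n$ for $m,n\in C_b$, so Kirchhoff's law (\ref{eq:kirchhoff_dc_switch}) and the flow limits (\ref{eq:DC_OTS_power_flow_limit}) still hold on every edge of $H$; on the new edge we have $f_{ij}=0=B_{ij}(\theta_i-\theta_j)$ after the shift and $-\bar{f_{ij}}\le 0\le \bar{f_{ij}}$, so (\ref{eq:kirchhoff_dc_switch}) and (\ref{eq:DC_OTS_power_flow_limit}) hold there too; and $p$ is unchanged, so (\ref{eq:DC_OTS_power_prod_limit}) holds and the objective (\ref{eq:DC_OTS_obj}) is unchanged. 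Hence the new solution is again optimal and its switched-on subgraph has exactly $k-1$ components, so iterating the step at most $k-1$ times produces an optimal solution whose switched-on branches form a connected network.

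The one point that needs care is that the repeated angle shifts in step (iii) must not retroactively violate Kirchhoff's law on branches switched on in earlier iterations. This is fine because at the moment $C_b$ is shifted it is a current connected component of the switched-on subgraph, so $H$ has no edge with exactly one endpoint in $C_b$, and every later shift moves an entire union of current components; I expect spelling out this invariant to be the only mildly technical part of the write-up, but it poses no genuine obstacle.
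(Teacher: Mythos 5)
Your exchange argument is correct, and since the paper itself gives no proof of this proposition (it is imported verbatim from the cited reference \cite{kocuk2016}), the relevant comparison is with that source, whose proof is essentially the same: close a cross-component line with zero flow and translate the phase angles of one component so that constraint (\ref{eq:kirchhoff_dc_switch}) holds on the new line, then iterate. Your handling of the iteration invariant (each shift moves an entire current component, and switched-off lines impose no angle constraint in the bilinear formulation) is exactly the care the argument needs, so nothing is missing.
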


It is also desirable for electric grid to connected, see \cite{jabr2012minimum}. In the remaining of this paper, we assume solution of DC OTS problem is always connected.

\begin{Proposition}
 \label{pro:path_big_M}
In the DC OTS problem, if a elementary path $P$ from node $i$ to node $j$ is on service, then weight of $P$ times $B_{ij}$ is a valid $M_{ij}$.
\end{Proposition}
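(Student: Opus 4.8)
The plan is to turn the ``on-service'' hypothesis on the path $P$ into a telescoping bound on the voltage-angle difference $|\theta_i-\theta_j|$, and then observe that $B_{ij}(\theta_i-\theta_j)$ is exactly the quantity that the big-$M$ term in \eqref{eq:kirchhoff_dc_switch_linearized} must dominate in the only case that matters, namely $x_{ij}=0$ (in which case \eqref{eq:DC_OTS_power_flow_limit} already forces $f_{ij}=0$, so \eqref{eq:kirchhoff_dc_switch_linearized} reduces to $|B_{ij}(\theta_i-\theta_j)|\le M_{ij}$). First I would write $P=(i=v_0,v_1,\dots,v_k=j)$ and record that ``$P$ on service'' means $x_{v_\ell v_{\ell+1}}=1$ for every $\ell$. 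For each such edge, \eqref{eq:kirchhoff_dc_switch} (equivalently \eqref{eq:kirchhoff_dc_switch_linearized} with $x=1$) gives $\theta_{v_\ell}-\theta_{v_{\ell+1}}=f_{v_\ell v_{\ell+1}}/B_{v_\ell v_{\ell+1}}$, while \eqref{eq:DC_OTS_power_flow_limit} with $x=1$ gives $|f_{v_\ell v_{\ell+1}}|\le \bar f_{v_\ell v_{\ell+1}}$.

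Next I would telescope along $P$ and apply the triangle inequality:
\[
|\theta_i-\theta_j|=\Bigl|\sum_{\ell=0}^{k-1}\bigl(\theta_{v_\ell}-\theta_{v_{\ell+1}}\bigr)\Bigr|
=\Bigl|\sum_{\ell=0}^{k-1}\frac{f_{v_\ell v_{\ell+1}}}{B_{v_\ell v_{\ell+1}}}\Bigr|
\le \sum_{\ell=0}^{k-1}\frac{\bar f_{v_\ell v_{\ell+1}}}{B_{v_\ell v_{\ell+1}}}=:w(P),
\]
so $w(P)$ is precisely the weight of $P$ under the edge weights $\bar f_{ab}/B_{ab}$. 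Multiplying through by $B_{ij}$ gives $|B_{ij}(\theta_i-\theta_j)|\le B_{ij}\,w(P)$, and since $f_{ij}=0$ whenever $x_{ij}=0$, constraint \eqref{eq:kirchhoff_dc_switch_linearized} is satisfied by any feasible solution in which $P$ is on service once we set $M_{ij}=B_{ij}\,w(P)$; hence this value is valid. I would then note that, by Proposition \ref{pro:connectivity_opt} together with the standing connectivity assumption, for every switched-off branch $(i,j)$ of an optimal solution the on-service subgraph contains an elementary $i$--$j$ path, so the hypothesis of the proposition is actually met and the bound is applicable.

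The proof is essentially a one-line estimate, so there is no serious obstacle; the only point requiring care is orientation bookkeeping. The path may traverse a stored edge $(a,b)\in E$ in the direction $b\to a$, so I would remark that $B_{ab}$ and $\bar f_{ab}$ are symmetric in their indices and that both the identity $\theta_a-\theta_b=f_{ab}/B_{ab}$ and the bound $|f_{ab}|\le\bar f_{ab}$ are orientation-independent, which is what makes the telescoping sum collapse cleanly regardless of how the edges of $P$ are listed in $E$.
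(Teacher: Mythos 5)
Your proposal is correct and follows essentially the same argument as the paper: combine the Kirchhoff constraint and the flow bound on each on-service edge of $P$ to bound each angle difference by $\bar f_{uv}/B_{uv}$, telescope along the path, and multiply by $B_{ij}$ to see that \eqref{eq:kirchhoff_dc_switch_linearized} is redundant when $x_{ij}=0$. Your extra remarks on the $f_{ij}=0$ reduction, orientation bookkeeping, and the connectivity assumption are sound clarifications but do not change the route.
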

\begin{proof}
As the path $P$ is on service, $x_{uv} = 1$ for every edge $(u,v)\in P$. Thus, for each edge $(u,v)\in P$, $-\bar{f}_{uv}\leq B_{uv}(\theta_u - \theta_v) \leq \bar{f}_{uv}$. Therefore, $-\sum_{(u,v) \in P}\frac{\bar{f}_{uv}}{B_{uv}}\leq \sum_{(u,v) \in P}(\theta_u - \theta_v) \leq \sum_{(u,v) \in P}\frac{\bar{f}_{uv}}{B_{uv}}$. As $\theta_i - \theta_j = \sum_{(u,v) \in P}(\theta_u - \theta_v)$, this implies that $-B_{ij}\sum_{(u,v) \in P}\frac{\bar{f}_{uv}}{B_{uv}}\leq B_{ij}(\theta_i - \theta_j) \leq B_{ij}\sum_{(u,v) \in P}\frac{\bar{f}_{uv}}{B_{uv}}$. Thus, constraint (2.8) is redundant when $x_{ij}=0$, with $M_{ij} := B_{ij}\sum_{(u,v) \in P}\frac{\bar{f}_{uv}}{B_{uv}}$.
\end{proof}

Proposition \ref{pro:path_big_M} shows that $M_{ij}$ equalling $B_{ij}$ times the weight of the longest elementary path from node $i$ to node $j$ does not cut off any feasible solution, where weight of a path $P$ is defined as $\sum_{(u,v) \in P} \frac{\bar{f}_{uv}}{B_{uv}}$. We refer to this big M value as longest-path big-M value. 

The choice of $M_{ij}$ parameters influences significantly the performance of MIP solvers based on LP branch-and-cut procedure. Large values of $M_{ij}$'s often result in ``weak" LP relaxations thus hampering the performance of MIP solvers. Indeed, in the formulation it is desirable to have the smallest big M values that do not cut off the optimal solution.
Fattahi et al. \cite{fattahi2018bound} define the theoretically smallest $M_{ij}$ as 
\begin{equation}
    M_{ij}^{opt} = B_{ij} \times \max_{(p,f,\theta, x)\in F}\{|\theta_i - \theta_j|\}
\end{equation}
where $F:=\{(p,f,\theta, x): \eqref{eq:DC_OTS_power_balance}-\eqref{eq:x_bin}\}$. They also showed that {\it i)} computing $M_{ij}^{opt}$ is NP-hard and there is no polynomial-time constant-factor approximation algorithm for $M_{ij}^{opt}$; and {\it ii)} $M_{ij}^{opt}$ can be arbitrarily large on networks with special topology. 
But even using the weighted longest path between the endpoints of each edge of the network as big M value, as suggested by Binato et al. \cite{binato2001new} among others, the computation of such values is still demanding at least from the theoretical point of view. Moreover, the weighted longest past is based exclusively only on the network topology and ignores completely the power demand at each bus and power production costs at each power plant, which may lead to too conservative values (larger than $M_{ij}^{opt}$). Power demand, generation capacity and production costs are all relevant information that can be used to compute smaller big M values, as shown in the example below.

Consider the example depicted in Figure \ref{fig:big_M_exa}. Suppose that the links marked with a thicker line are switchable, while the others are not. The number next to the link is the transmission capacity $\bar{f}$ and the susceptance $B$ is 1 for all the links. In this case, the big-M parameters suggested by (weighted) longest path are equal to 6 for both the switchable links. Suppose now that the generator $g1$ is cheaper and there is a power demand of 2.5 MW at bus {\it t} of the network. The power demand at the other buses is null. In this case, a big-M value of 1 for link $(f,g2)$ is sufficient. To see this note that in the event $(b,g1)$ is switched on a maximum of 1.5 units can be supplied from $g1$, this implies a maximum of 1 units is supplied from $g2$. In fact even $M_{f,g2}=0.5$ is sufficient as no unit is required from $g2$ if $(b,g1)$ is switched off. This example suggests taking network structures, power demand and generation costs into consideration to get smaller big M parameter. 

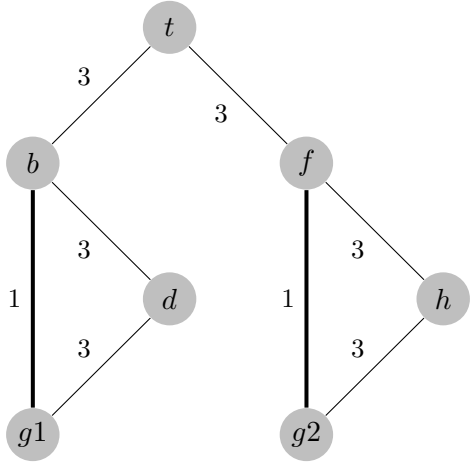
\begin{figure}[H]
\centering
\begin{tikzpicture}[scale=1.8, auto,swap]
    \foreach \pos/\name in { {(2,1)/t},
                            {(3,0)/f},  {(2,-1)/d}, {(1,0)/b},  {(4,-1)/h},
                            {(1,-2)/g1},  {(3,-2)/g2}}
        \node[vertex] (\name) at \pos {$\name$};
     \foreach \source/ \dest /\weight in { 
                                         t/b/3, t/f/3, f/h/3, h/g2/3, b/d/3, d/g1/3}
        \path (\source) edge [-]  node[weight] {$\weight$} (\dest);
        \foreach \source/ \dest /\weight in { 
                                         b/g1/1, f/g2/1}
        \path (\source) edge [-, line width = 1.5]  node[weight] {$\weight$} (\dest);
        
\end{tikzpicture}
\caption{Big-M Example (Switchable links - highlighted)}
\label{fig:big_M_exa}
\end{figure}

\section{Heuristic methods for big M values}
\label{sec::Heuristic}
In this section, we present two data driven polynomial-time heuristic methods to compute big M parameters. Both the  methods take power demands and production costs into account. We call the first method {\it k-shortest-path}  (kSP) and the second {\it  k-nearest-neighborhood simulation method} (kNN).

\subsection{k-shortest-path method}

The k-shortest-path method is based on the observation that follows directly from Proposition \ref{pro:path_big_M}. Indeed, the phase angle difference between any pair of nodes -and hence the big M value - is limited above by the weighted shortest path between the two nodes, where the weight of each edge is given by  $\frac{\bar{f}}{B}$. If one edge of the shortest path is removed from the graph (i.e., it is switched off), than the second shortest path becomes the one limiting the phase angle difference between the two nodes. We can iterate the procedure, i.e.,  removing one edge from the current shortest path and selecting the next shortest path, until we either reach the k-shortest-path (for a given parameter k) or we obtain an infeasible problem, meaning that no edge of the path can be switched off. In other words, this path has to {\it be on service}, and its weight determines the big M value. Given a path, we use the approach proposed by Fuller et al. \cite{fuller2012fast} to select the edge to be switched off. Fueller et al. used DC OPF duality information and KKT optimality conditions to compute edges' shadow price, which provides a (local) measure of objective function's improvement if the edge is switched off.  More in particular, they showed that the shadow price $\alpha_{ij}$ of edge ({\it i,j}) is given by $(\pi_i - \pi_j)f_{ij}$, where $\pi_i$ and $\pi_j$ are the dual variable of the power balance constraint at buses {\it i} and {\it j} of the OPF problem respectively. These constraints are exactly the same as constraints \eqref{eq:DC_OTS_power_balance} and \eqref{eq:DC_OTS_power_balance_gen} of the DC OTS formulation described above. 

A negative value of $\alpha$ "infers" that removing edge $(i,j)$ from the network would results in lower production costs. Therefore, for each shortest path, we iteratively check whether an edge can be off service without making DC OPF infeasible from the smallest $\alpha$ to the largest one. We also test edges with $\alpha > 0$, because we are only interested in verifying if the path is a {\it be-on-service } path or not, i.e., if the underlying OPF problem - after disconnecting the path - is feasible or not. 

Once we find that an edge can be switched off, we remove this edge from the graph and proceed to the next shortest path.
If none of the edges in the path can be switched off (i.e. removing any of those edges makes the reduced OPF problem infeasible), we attempt to restore feasibility of the reduced OPF problem by removing one more edge outside the path that contributes to infeasibility. If feasibility of the reduced OPF problem is successfully retrieved in this way, it shows that path under consideration can be off-service and the algorithm proceed to consider the next shortest path. Otherwise, we label the path under consideration as "must-on-service" and outputs the big M value determined by weight of this path. In principle, we can attempt to restore feasibility of the infeasible reduced DC OPF problem by removing more than one edge. However, that leads to significantly heavier computation burden thus we choose not to do so. 

\begin{subequations} \label{eq:dc_OPF}
\begin{align}
    \min_{p,f,\theta} \quad & c^T \cdot p \\
    \textrm{s.t.}\quad & \sum_{j \in V}f_{ji} - \sum_{j \in V}f_{ij} + p_i = d_i, \;\forall i \in N  \label{eq:cons_flow_balance}\\
    & f_{ij} = B_{ij}(\theta_i - \theta_j), \;\forall (i,j) \in E  \label{eq:cons_kirchhoff_dc} \\ 
    & -\bar{f_{ij}} \leq f_{ij} \leq \bar{f_{ij}}, \; \forall (i,j) \in E\\
    & p_i^{min}\leq p_i \leq p_i^{max}, \; \forall i \in G
\end{align}
\end{subequations}

\begin{subequations} \label{eq:dc_OPF_fuller}
\begin{align}
    \min_{p,f,\theta} \quad & c^T \cdot p & & \\
    \textrm{s.t.}\quad & \sum_{j \in V}f_{ji} - \sum_{j \in V}f_{ij} + p_i = d_i, \;& \forall i \in N &\qquad [\pi_i] \label{eq:cons_flow_balance_fuller}\\
    & f_{ij} = B_{ij}\cdot x_{ij} \cdot(\theta_i - \theta_j), \;&\forall (i,j) \in E &\qquad [\sigma_{ij}] \label{eq:cons_kirchhoff_dc_fuller} \\ 
    & 1 - x_{ij} = 0, \;&\forall (i,j) \in E &\qquad [\alpha_{ij}] \label{eq:fuller_lambda}\\
    & -\bar{f_{ij}}\cdot x_{ij} \leq f_{ij} \leq \bar{f_{ij}} \cdot x_{ij}, \; &\forall (i,j) \in E &\\
    & p_i^{min}\leq p_i \leq p_i^{max}, \;& \forall i \in G &
\end{align}
\end{subequations}

In order to find the edges contributing to infeasibility, we use a method inspired by the method of identifying minimal (or irreducible) infeasible subsystem (MIS or IIS) of linear inequalities in \cite{codato2006combinatorial}. Consider an arbitrary infeasible set of linear equalities and linear inequalities 
\begin{equation} \label{eq:infeasible_linear_system}
    \{x:Ax = b, Hx \geq g\}
\end{equation}
A minimal infeasible subsystem of (\ref{eq:infeasible_linear_system}) is a inclusion-minimal set of linear equalities and linear inequalities selected from $Ax=b$ and $Hx \geq g$, respectively:
\begin{equation}\label{eq:infeasible_linear_subsystem}
     \{x:A'x = b', H'x \geq g'\}
\end{equation}
such that it has no feasible solution (i.e. (\ref{eq:infeasible_linear_subsystem}) is empty). In order to find a feasible subsystem of (\ref{eq:infeasible_linear_system}), we must remove at least one linear equality or inequality from each of its MIS.

In the DC OPF problem (\ref{eq:dc_OPF}), removing an edge $(i,j)$ from the network is equivalent to removing the constraint (\ref{eq:cons_kirchhoff_dc}) corresponding to the edge $(i,j)$ and imposing an additional constraint $f_{ij}=0$. Let us consider the constraints (\ref{eq:cons_kirchhoff_dc}) in a MIS of the OPF problem (\ref{eq:dc_OPF}) and the set of edges $E_{MIS}$ corresponding to those constraints. OPF problem (\ref{eq:dc_OPF}) on switchable network can be feasible only if at least one edge in $E_{MIS}$ is switched off. Thus, we restrict the candidates for $e_{out}$ to $E_{MIS}$.
Next we describe implementation details of this method. Let us use $n$ to represent the number of nodes, $m$ to represent the number of edges, $v_n$ to represent a vector $v$ with $n$ entries and $V_{n\times m}$ to represent a matrix $V$ with $n \times m$ entries. For the ease of presentation, in the remaing of this work, we will present the OPF problem (\ref{eq:dc_OPF}) in a compact matrix-vector form $\min_x\{\hat{c}^T x: \hat{A}x=\hat{b},\hat{H}x \geq \hat{g}\}$, where $\hat{c} = \begin{pmatrix} c_n\\0_n\\0_m\end{pmatrix}$ , $x=\begin{pmatrix} p_n\\ \theta_n \\ f_m
\end{pmatrix}$, $\hat{A} = \begin{pmatrix} I_{n\times n} & 0_{n \times n} & \Omega_{n \times m}\\0_{m\times n} & B_{m\times n} & -I_{m\times m}\end{pmatrix}$, $\hat{H}=\begin{pmatrix} 0_{m\times n} & 0_{m\times n} & I_{m\times m}\\0_{m\times n} & 0_{m\times n} & -I_{m\times m}\\
I_{n\times n} & 0_{n\times n} & 0_{n\times m} \\-I_{n\times n} & 0_{n \times n} & 0_{n \times m}\end{pmatrix}$, $\hat{b}=\begin{pmatrix}
d_n \\ 0_m\end{pmatrix}$ and $\hat{g}=\begin{pmatrix}-\bar{f}_m\\ -\bar{f}_m \\ \underline{p}_n\\-\bar{p}_n\end{pmatrix}$. $I_{n \times n}$ means $n$-dimensional identity matrix, $\Omega_{n\times m}$ denotes the node-arc incidence matrix, matrix $B_{m\times n}$ is a sparse matrix with non-zero components: $B_{l,i}=B_{ij}$ and $B_{l,j}=-B_{ij}$ for each edge $l$ starting from node $i$ and ending at node $j$. To identify $e_{out}$, we first remove the edge with the smallest $\alpha$ value in the path from the network. Let us denote the remaining constraints in (\ref{eq:dc_OPF}) as $\{\hat{A}x=\hat{b},\hat{H}x \geq \hat{g}\}$ and construct the LP $\min_x \{0^T x: \hat{A}x = \hat{b}, \hat{H}x \geq \hat{g}\}$ as well as its dual $\max_{y,u} \{y^T \hat{g}+ u^T \hat{b}: y^T \hat{H} + u^T \hat{A} = 0^T, y \geq 0\}$ with $u$ being the dual variables of equality constraints and $y$ being the dual variables of inequality constraints. Since dual infeasibility is excluded by the solution $(y,u)=0$, primal infeasibility corresponds to dual unboundedness, i.e. there exists a dual solution $(y,u)$ such that $y^T \hat{g}+ u^T \hat{b} > 0$. Therefore, we can normalize the dual solution by replacing the objective with a constraint $y^T \hat{g}+ u^T \hat{b}=1$. It is shown in \cite{gleeson1990identifying} that the non-zero elements of each vertex of the polyhedron 
\begin{equation} \label{eq:poly_find_MIS}
    \{(y,u):y^T \hat{H} + u^T \hat{A} = 0^T, y^T \hat{g}+ u^T \hat{b}=1, y \geq 0\}
\end{equation}
defines a MIS of (\ref{eq:infeasible_linear_system}) and thus a candidate set for $e_{out}$. In order to find a vertex of (\ref{eq:poly_find_MIS}), we minimize $1^T y$ over (\ref{eq:poly_find_MIS}). This LP is guaranteed to be solved to optimality. We then use the set of edges corresponds to non-zero $u_i, i \in J$ in the optimal solution as the candidates for $e_{out}$, where $J$ is the set of indices of constraints (\ref{eq:cons_kirchhoff_dc}). We then attempt to select an edge from the candidates to be $e_{out}$ as shown in Algorithm \ref{alg::kSP}. If no edge in the candidate set satisfies the requirement of $e_{out}$, we decide that the current path "must" be on service in an economical network configuration, which determines the k-shortest-path big-M values. A formal description of kSP method is given in Algorithm \ref{alg::kSP}, where "dualMIS" denotes the problem of minimizing $1^T y$ over (\ref{eq:poly_find_MIS}) and $w(P^k_{ij})$ denote weight of $k$-th shortest path from node $i$ to node $j$.

\begin{algorithm}
	\caption{- kSP} 
	\begin{algorithmic}[1]
		\State Set parameters: $k_{max}$, $e_{max}$, $l$
		\For {$(i,j) \in E$}
		    \State Solve DC OPF on G
		    \State $\alpha^0 \gets \alpha^0(G)$  
		    \State $k \gets 1$
		    \While {$k < k_{max}$}
		        \If{all edges in $k$-th shortest path is on service}
		            \State Order edges in $k$-th shortest path from smallest $\alpha^{k-1}$ to largest: $\{j_1,\ldots,j_m\}$
		            \State $i \gets 0$
		            \While{$i < e_{max}$}
		                \State $i \gets i+1$
		                \State Solve DC OPF on $G^\prime = G(N; E \setminus j_i)$
		                \If {DC OPF feasible}
		                \State $\alpha^k \gets \alpha^k(G')$
		                \State $E \gets E \setminus j_i$
		                \State Go to line \ref{update_k}
		                \EndIf
		            \EndWhile
		            \State Solve dualMIS on $G^\prime = G(N; E \setminus j_1)$
		            \State Compute $E^\prime = \{(p,q)\in E: u_{pq} \neq 0\}$
		            \State Compute $E^{\prime \prime} = \{h_1,\ldots,h_s\}$ by ordering edges in $E^\prime$ from smallest $\alpha^{k-1}$ to largest
                    \State $v \gets 0$
                    \While{$v<s$}
                        \State $v \gets v + 1$
                        \State Solve DC OPF on $\hat{G} = G(N; E \setminus \{j_1,h_v\})$
                        \If{DC OPF feasible}
		                \State $\alpha^k \gets \alpha^k(\hat{G})$ 
		                \State $ E \gets E \setminus \{j_1,h_v\}$
		                \State Go to line \ref{update_k}
                        \EndIf
                    \EndWhile
                    \State Go to line \ref{output_M}
		        \EndIf
                \State $k \gets k + 1$ \label{update_k}
		    \EndWhile
		    \State $k \gets k + l$ \label{output_M}
		    \State $ M_{ij} =  w(P_{ij}^k) \times B_{ij}$
		\EndFor
	\end{algorithmic} 
	\label{alg::kSP}
\end{algorithm}

\subsection{k-nearest-neighbor Simulation Method}
Compared to \textit{k-SP}, our second heuristic is a randomized method that relies on the following intuition. As illustrated in Figure \ref{fig:big_M_exa}, the $M_{ij}$ value depends on which edges remain in the network. More importantly, the value of $M_{ij}$ is likely to depend on the status of switchable edges that are in close vicinity rather than those which are spatially far. This is because switching off nearby edges likely removes shorter paths forcing a larger $M_{ij}$ value as compared to faraway edges. We use this observation to explore the impact on $M_{ij}$ by switching off randomly selected edges in a  given neighbourhood. Moreover, this heuristic is inspired by large class of randomized local search heuristics often used in large scale combinatorial optimization such as \textit{GRASP} heuristics \cite{feo1995}. A formal description of the \textit{k-nearest-neighbor (kNN) simulation} method is given below in Algorithm \ref{alg::kNN}.

\begin{algorithm}
	\caption{- kNN} 
	\begin{algorithmic}[1]
		\State Set parameters: $k$, $h<1$, $s$, $r$
		\For {$(i,j) \in E$}
    		\State $inf \leftarrow 0$
    		\State Compute $N_{ij}= \{ (u,v) \in E: d_{ij}(u) \leq k \land d_{ij}(v) \leq k\}$
			\For {$iter=1,2,\ldots,r$}
        		\State Select $N^\prime_{ij} \subset N_{ij} : |N^\prime_{ij}| = h \cdot|N_{ij}|$
				\State Solve DC OPF on $G^\prime = G(N; E \setminus N^\prime_{ij})$
				\If {DC OPF feasible} 
				 \State   $\Delta\Theta^{iter}_{ij} =| \theta_i - \theta_j|$
				\Else $\hspace{2mm} \Delta\Theta^{iter}_{ij} = 0$ and $inf \leftarrow inf +1$
				\EndIf
			\EndFor
			\State $\Delta\Theta_{ij} \leftarrow max_{iter}\{ \Delta\Theta^{iter}_{ij} \}$ 	
			\If {$inf < r$} 
				\State $M_{ij} = \min \{s \times \Delta\Theta_{ij}, w(P_{ij}^L)\}\times B_{ij}$ 
			\Else  $\hspace{2mm} M_{ij} =  w(P_{ij}^L) \times B_{ij}$
			\EndIf
		\EndFor
	\end{algorithmic} 
	\label{alg::kNN}
\end{algorithm}
At line 4 of the algorithm, to compute the neighborhood of edge $(i,j)$ ($N_{ij}$), i.e., the set of edges that are close to $(i,j)$, we use  Dijktra's algorithm. Indeed, we consider in the neighborhood all the edges whose endpoints are at most $k$ edges away from either {\it i} or {\it j} (in formula, $d_{ij}(u) \leq k$). If the DC OPF problems are all infeasible - line 7 of the algorithms -, we use as big M value for edge $(i,j)$ the value provided by the longest past method $w(P_{ij}^L) \times B_{ij}$; wehre $w(P_{ij}^L)$ is the weight of longest weighted path from node $i$ to node $j$ ($P_{ij}^L$).

\subsection{An illustrative example}
In this section we use a small benchmark instance IEEE 14-bus system \cite{IEEE14} displayed in Figure \ref{fig:14bus}, to illustrate with an example on how longest-path, k-shortest-path and kNN simulation method compute big-M value. For this purpose, we show the process of computing big M value of edge $(3,4)$ -i.e., $M_{3,4}$ - with the listed three methods. Transfer capacity is set to 27 for all the branches, as these values were not provided in the original data. For  kSP and kNN simulation methods, we consider the following parameters' setting: $k_{max} = 5$, $e_{max} = 3$ and $l = 1$ for kSP and $k=2,h=20,s=10$, and $r=30$ for kNN.  The big-M value of edge $(3,4)$ returned by the three methods is 355.5 (longest-path method), 260.9 (kSP method) and 216.4 (kNN simulation method).

In the longest-path method, $M_{3,4}$ is given by $B_{34}$ times the weight of the (weighted) longest path, i.e., path $3\to2\to1\to5\to6\to12\to13\to14\to9\to4$. 

As far as the kSP method, the first shortest path to consider (first iteration) is edge $(3,4)$ itself. We remove edge $(3,4)$ from the network and solve the OPF problem $OPF((3,4))$, which turns out to be infeasible. We attempt to restore the feasibility of the  OPF problem by removing one additional edge $(e_{out})$ from the network.  To identify candidate edges for $e_{out}$, we solve $dualMIS((3,4))$.  We rank this set of edges by linear programming duality as described in \S \ref{sec::bigM}: $\{(4,5), (2,4), (6,11), (1,2), \ldots\}$. We check if the OPF problem becomes feasible by removing one additional edge of the list. The first feasible OPF problem is $OPF((3,4),(1,2))$,  with $e_{out} := (1,2)$. We then remove $(3,4)$ and $(1,2)$ from the network.  At the second iteration, the shortest path (on the current network) is $3\to2\to4$. Using arguments of \S\ref{sec::bigM}, we rank the two edges as follows: $\{(4,2),(2,3)\}$. We solve $OPF((4,2))$, which turns out to be infeasible. We then solve $OPF((2,3))$, which is feasible. We then remove edge $(2,3)$ from the network. At the third iteration,  it turns out that the third, fourth and fifth shortest paths are no longer active as some edges of those paths have already been removed. As $k_{max}$ is reached when considering the fifth shortest path, we terminate the algorithm and output $k_{34} = 5 + l = 6$. Big-M value on edge $(3,4)$ is defined by $B_{34}$ times weight of the sixth shortest path from node $3$ to node $4$.

In the kNN simulation method, first we remove  edge $(3,4)$ from the network and identify the set of nodes whose distance is at most $k=2$ from either node $3$ or node $4$, i.e., $\{1,2,5,7,9,10,14,8\}$. All the edges with both the endpoints in the list define the neighborhood, specifically   
$\{(4,2),(3,2),(2,1),(2,5),$ $(1,5),(4,5),(9,10),(4,9),(7,9),(4,7),(7,8),(9,14)\}$.  Select randomly $h=20\%$ of the edges (i.e., 2 edges in this example) from the neighborhood. Solve OPF problem on the network without these randomly selected (two) edges and compute the phase angle difference $\Delta \theta_{34} = |\theta_4 - \theta_3|$ from the OPF solution. Repeat this procedure for $r=30$ times and denote the largest $\Delta \theta_{43}$ as $\Delta' \theta_{43}$. We set as big M value for edge $(4,3)$ $M_{3,4}=\min \{s \times \Delta \theta_{43}', w(P_{43}^L)\}\times B_{43}$ , where $P_{43}^L$ denote the longest weighted path from node 3 to node 4.

\begin{figure} [h]
     \begin{subfigure}[b]{0.45\textwidth}
        \centering
        \includegraphics[width=\textwidth]{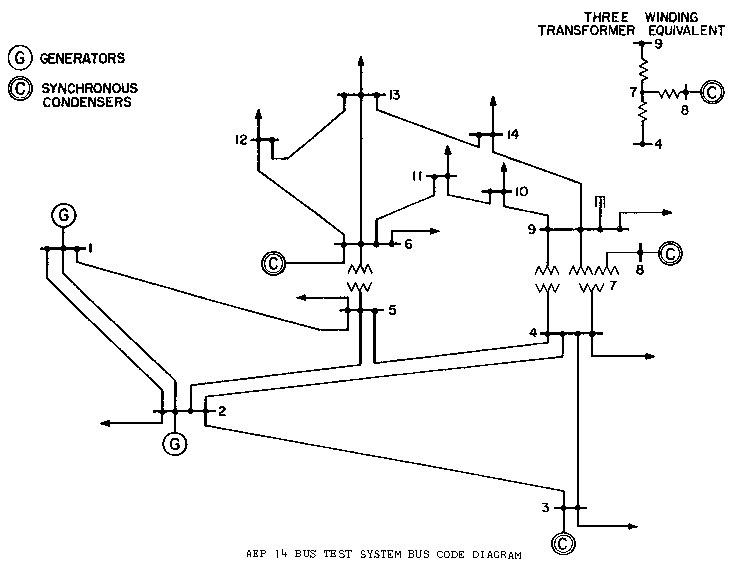}
         \caption{diagram of 14-bus system}
         \label{fig:diag_14_bus}
     \end{subfigure}
     \hfill
     \begin{subfigure}[b]{0.45\textwidth}
         \centering
        \includegraphics[width=\textwidth]{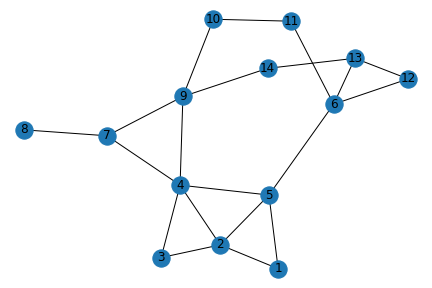}
         \caption{network of 14-bus system}
         \label{fig:graph_14_bus}
     \end{subfigure}
     \caption{IEEE 14 bus instance}
     \label{fig:14bus}
\end{figure}

\section{Computational Experiments}
\label{sec::Comp}
In this section, we present an extensive computational study that demonstrates the effectiveness of our proposed approaches.  
We compare the kSP and kNN methods with the longest-path approach (LWP) on a set of benchmark instances.
More specifically, we consider two IEEE test cases, i.e., case118B and case300B,  widely used in the literature. The former consists of 118 buses, 180 branches and 19 committed generators with different power production costs. The latter includes 300 buses, 409 branches and 61 committed generators with different power production costs. In addition to the nominal demand scenario, which is part of the IEEE instances, we also consider both a low-demand and a high-demand scenario. These demand scenarios are obtained from the nominal one multiplying the load at each bus by 0.95 and 1.05  respectively. 
For each pair of grid and demand scenario - defining a class of instances -, we generated twenty instances by multiplying the load at each bus by a random variable with a uniform distribution in the interval [0.95, 1.05]. Hence, in total we considered 120 instances of the OTS problem with different characteristics. "118/300L", "118/300N", "118/300H" denote the low, normal and high demand class of instances, respectively.
\newline
All the computational experiments are run on a laptop with Intel(R) Core(TM) i7-8750H @2.20 GHz 2.21GHz processor and 16GB RAM.  We use Julia v1.5.2 with JuMP v0.21.5 programming language \cite{lubin2015computing}  to implement the model and the algorithms, and Gurobi 9.0.1 as a solver. To compute the big M values with the longest weighted path approach, the Miller-Tucker-Zemlin (MTZ) longest path formulation \cite{taccari2016integer} is implemented and solved by Gurobi 9.0.1, imposing a time limit of 600 seconds and setting an optimality gap to 1\%.
k-shortest paths are computed using the algorithm proposed in \cite{yen1971finding}.

\subsection{Parameters' tuning and results}
The first phase of the computational study has been devoted to tuning the parameters of both the kSP and kNN methods. The goal is to identify a set of parameters which work well across all instances of a given class.

Table \ref{tab:para} lists all the values of the parameters we have tested for each class of instances (CoI). The ``best" parameters' setting for each class of instances are highlighted with bold fonts.
\begin{table}[htb]
\centering
\begin{tabular}{c | c c c | c c c c} 
 \multicolumn{1}{c}{}    & \multicolumn{3}{c}{kSP} & \multicolumn{4}{c}{kNN} \\ \cline{2-8}
CoI  & $k_{max}$ & $e_{max}$ & $l$  & $k$ & $h$ & $s$ & $r$\\ \hline \hline
118L & 11,{\bf 14},17,20   & 3,{\bf 5},7    & 0,{\bf 1}    & 3,4,{\bf 5} & 5, {\bf 10} & 5,10,{\bf 15} & 50\\
118N & {\bf 11},14,17,20   & 3,{\bf 5},7    & 0,{\bf 1}    & 3,{\bf 4},5 & 5, {\bf 10} & 5,10,{\bf 15}& 50\\
118H & {\bf 11},14,17,20   & 3,{\bf 5},7    & 0,{\bf 1}    & 3,{\bf 4},5 & {\bf 5}, 10 & 5,{\bf 10},15& 50\\ \hline
300L & 15,20,{\bf 25},30   & 5,7,{\bf 9}    & {\bf 0},1    & 4,{\bf 5},6,7 & {\bf 5}, 10 & 5,10,{\bf 15}& 50\\
300N & {\bf 15},20,25,30   & {\bf 5},7,9    & 0,1,2,{\bf 3}        & {\bf4},5,6,7 & {\bf 5}, 10 & 5,10,{\bf 15} & 50\\
300H & {\bf 15},20,25,30   & 5,{\bf 7},9    & 0,1,2,3,4,{\bf 5}    & 4,5,6,{\bf 7} & 5, {\bf 10} & 5,10,{\bf 15}& 50\\ \hline \hline
\end{tabular}
\caption{Parameters' values tested for kSP and kNN.}
\label{tab:para}
\end{table}

Given the nature of our approach, i.e., compute small enough big M values that allow computing (near) optimal OTS solutions in short computational time, quality of the solution and computational time are the two measures of interest. Therefore, we use both the measures to assess the effectiveness of proposed methods.  
To measure the solutions' quality, we use the following ratio $\frac{z_m-z_{LP}}{z_{LP}}$. This ratio - relative gap - compares the objective value  of the solution computed by the proposed method ($z_m$) with the objective value of the solution computed with big M values obtained by the longest-path method ($z_{LP}$). The smaller the relative gap, the better the solution quality. Negative values of the relative gap indicate that the proposed approach provides better solutions than the longest-path method. It is important to recall that all the instances are solved by the solver with a time limit of 600 sec. Therefore, for the sake of accuracy, we are comparing the best solution computed within the imposed time limit, which is the optimal one in many instances. Figure \ref{fig:ksp_tune} display the scatter plots of the kSP method for each class of instances. Each point of the scatter plot represents the average performance of kSP with one setting of the parameters on the twenty instances of the class. It appears that parameter {\it l} is key to the good performance of the method as it influences the trade off between quality of solution and computational time. Indeed this parameter was introduced to ensure a certain degree of robustness of the proposed approach by enlarging the big M values. To larger values of {\it l} correspond better solutions' quality (smaller relative gaps) at the cost of longer computational times. For the instances derived from case118B IEEE, $l=1$ provides a very good balance between solutions' quality and computational time. The relative gaps are very small - negative in several cases - independently of the demand scenario and the setting of the other two parameters. The computational times are also short, especially in the low and high demand scenarios. However, in this case the choice of $k_{max}$ and $e_{max}$ settings is relevant for the time. The settings that guarantee the shortest computational times are the chosen ones or the "best". It is also important to observe, that these settings are quite robust. In particular, the triplet $k_{max}=11$, $e_{max}=5$ and $l=1$  that is best for the nominal and high demand scenario performs quite well also in the low demand scenario (marked on the scatter plots with a star). 

Similar observations are also valid for the case300B IEEE family of instances. However, the class of instances with low demand scenario (300L) turn out to be ``easy" to solve and all the tested parameters perform reasonably well (see scatter plot in Figure \ref{fig:ksp_para_tune_300bus_95}). For the nominal and high demand scenarios, the effect of parameter $l$ becomes evident. Indeed, we observe again data in Figures \ref{fig:ksp_para_tune_300bus_100} and \ref{fig:ksp_para_tune_300bus_105} are clustered by values of $l$.

As far as the kNN method, see Figure \ref{fig:kNN_tune}, it computes good solutions of case118B instances. Indeed, for all the demand scenarios, it computes solutions with (very) small average gap though the computational times are not always very competitive at least in comparison with the kSP method. On the case300B test instances, though the performance with specific settings of the parameters can be good, it appears that several of the tested settings do not provide competitive solutions demonstrating the greater challenge of identifying the right settings to use.

\begin{figure}[h]
     \begin{subfigure}[b]{0.45\textwidth}
         \centering
        \includegraphics[width=\textwidth]{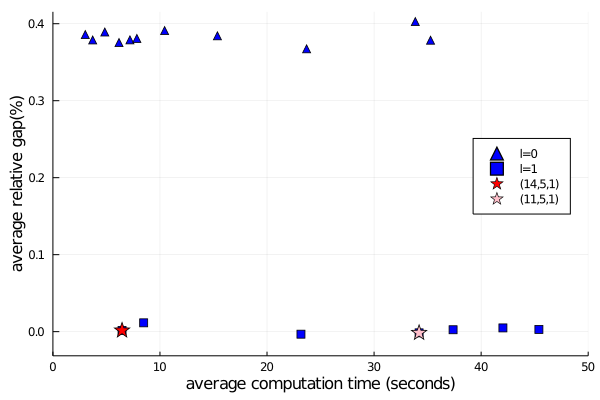}
         \caption{data set: 118L}
         \label{fig:ksp_para_tune_118bus_95}
     \end{subfigure}
     \hfill
     \begin{subfigure}[b]{0.45\textwidth}
         \centering
        \includegraphics[width=\textwidth]{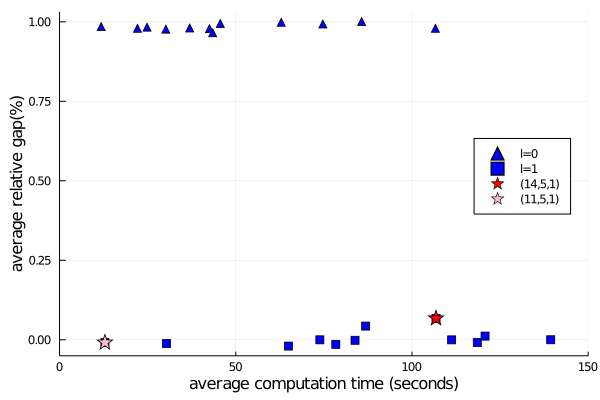}
         \caption{data set: 118N}
         \label{fig:ksp_para_tune_118bus_100}
     \end{subfigure}
    \begin{subfigure}[b]{0.45\textwidth}
         \centering
           \includegraphics[width=\textwidth]{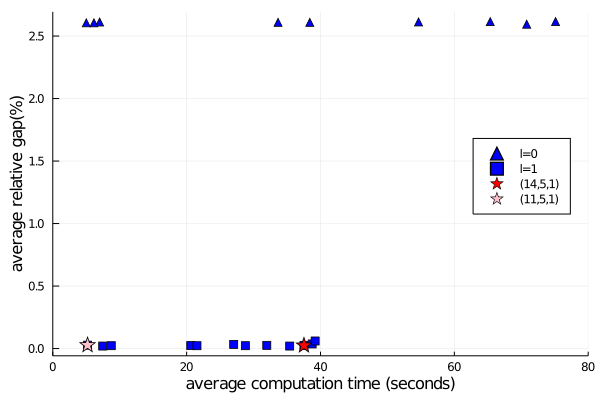}
         \caption{data set: 118H}
         \label{fig:ksp_para_tune_118bus_105}
    \end{subfigure}
     \begin{subfigure}[b]{0.45\textwidth}
         \centering
        \includegraphics[width=\textwidth]{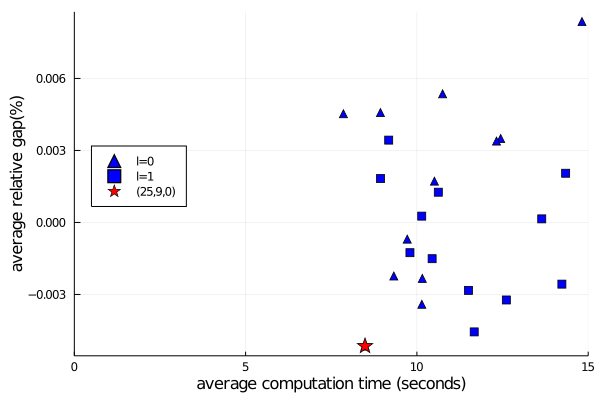}
         \caption{data set: 300L}
         \label{fig:ksp_para_tune_300bus_95}
     \end{subfigure}
     \hfill
     \begin{subfigure}[b]{0.45\textwidth}
         \centering
        \includegraphics[width=\textwidth]{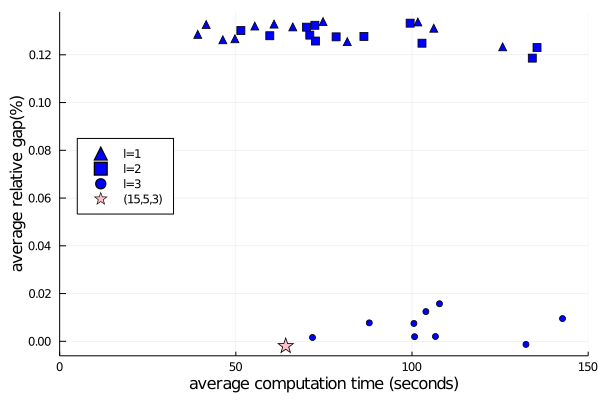}
         \caption{data set: 300N}
         \label{fig:ksp_para_tune_300bus_100}
     \end{subfigure}
      \hfill
    \begin{subfigure}[b]{0.45\textwidth}
         \centering
           \includegraphics[width=\textwidth]{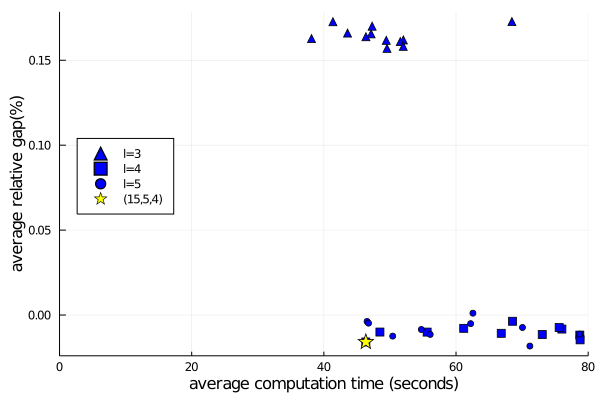}
         \caption{data set: 300H}
         \label{fig:ksp_para_tune_300bus_105}
    \end{subfigure}
    \caption{parameter tuning for kSP method}
    \label{fig:ksp_tune}
\end{figure}    

\begin{figure}[!]
     \begin{subfigure}[b]{0.45\textwidth}
         \centering
        \includegraphics[width=\textwidth]{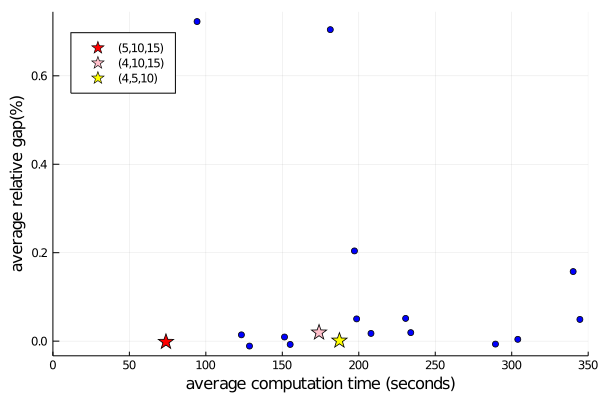}
         \caption{data set: 118L}
         \label{fig:kNN_para_tune_118bus_95}
     \end{subfigure}
     \hfill
     \begin{subfigure}[b]{0.45\textwidth}
         \centering
        \includegraphics[width=\textwidth]{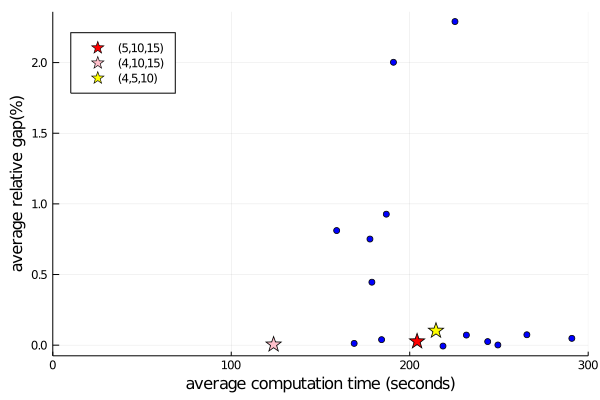}
         \caption{data set: 118N}
         \label{fig:kNN_para_tune_118bus_100}
     \end{subfigure}
    \begin{subfigure}[b]{0.45\textwidth}
         \centering
            \includegraphics[width=\textwidth]{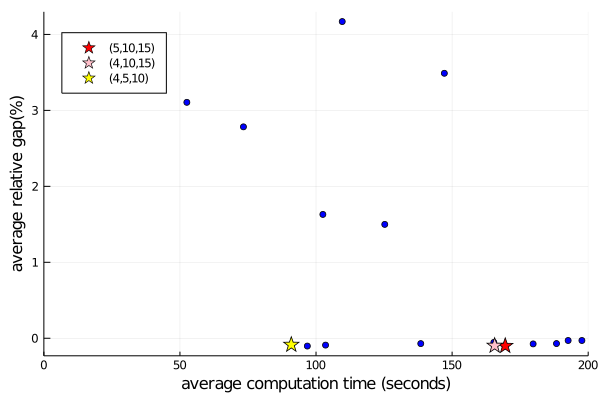}
         \caption{data set: 118H}
         \label{fig:kNN_para_tune_118bus_105}
    \end{subfigure}
     \begin{subfigure}[b]{0.45\textwidth}
         \centering
        \includegraphics[width=\textwidth]{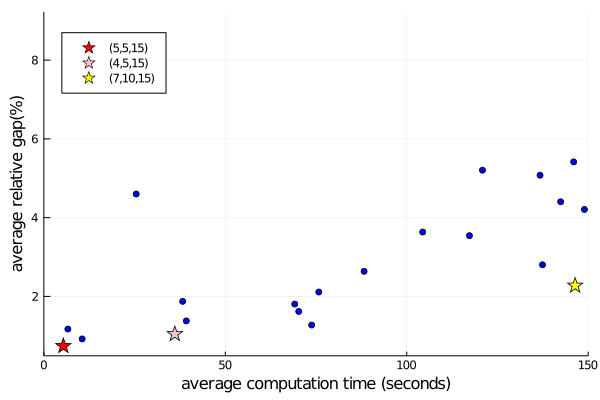}
         \caption{data set: 300L}
         \label{fig:kNN_para_tune_300bus_95}
     \end{subfigure}
     \hfill
     \begin{subfigure}[b]{0.45\textwidth}
         \centering
        \includegraphics[width=\textwidth]{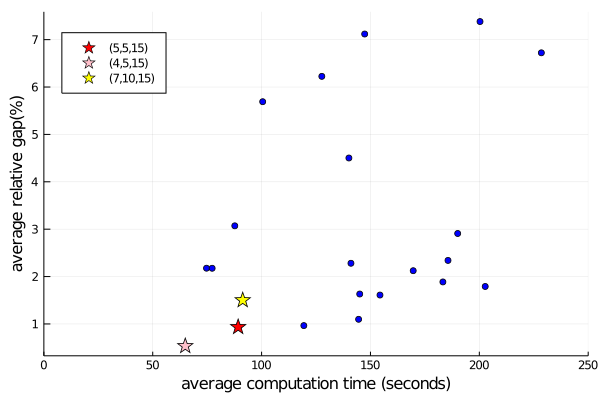}
         \caption{data set: 300N}
         \label{fig:kNN_para_tune_300bus_100}
     \end{subfigure}
     \hfill
    \begin{subfigure}[b]{0.45\textwidth}
         \centering
            \includegraphics[width=\textwidth]{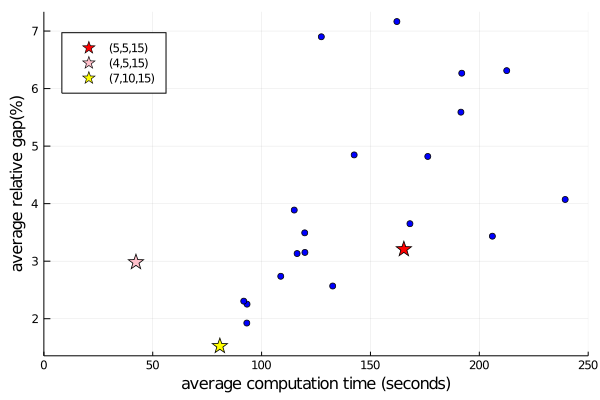}
         \caption{data set: 300H}
         \label{fig:kNN_para_tune_300bus_105}
    \end{subfigure}
    \caption{parameter tuning for kNN simulation method}
    \label{fig:kNN_tune}
\end{figure}  

\subsection{Computational results}
In this section, we report the computational results of the proposed methods. For the sake of accuracy, we here consider a cardinality constrained OTS problem, i.e., the formulation \ref{eq:DC_OTS} with the addition of an upper bound on the number of edges that can be switched off. In formula $\sum_{(i,j) \in E} (1-x_{ij}) \leq L$. For the experiments herein presented, we set the the upper bound $L$  to  45.

In Table \ref{tab:comp_results_demand}, we report a summary of the computational performances of the proposed approaches using the ``best" parameters' setting displayed in Table \ref{tab:para}. For comparison purposes, we also display the performances of the LWP method. All the displayed values are average values computed on the set of twenty instances in the specific class. For each method, Table\ref{tab:comp_results_demand} summarizes the following statistics: 
\begin{itemize}
\item "rel\_gap", i.e., the average relative gap;
\item "time", i.e., the average solution time in seconds;
\item "$\#$ u", i.e., the number of instances not solved to optimality within the time-limit; 
\item "opt\_gap", i.e., the average optimality gap. 
\end{itemize}
Please note that "rel\_gap" is not displayed for the LWP method because it provides the reference point and is zero by definition. Character "\_" denote the case where no observation is available. The best statistic value over the three method is highlighted with bold fonts for each class of instances. By and large, the kSP method is outperforming the other approaches. It solved all the instances at optimality, with short computational times (being the fastest on five classes of instances out of six), and without compromising the solution quality. Indeed, it provides better solutions than LWP for two classes of instances and for two others it provides solutions of the same quality. The average speedup of kSP over all 120 instances is almost 13 times with respect to LWP with a peak of 39.8 on the most difficult to solve instances (e.g., class 118H). 
The kNN shows very good computational performances on the classes of instances derived from case118B, especially in terms of solutions' quality. However, on instances of the case300B type, we didn't get results of the same quality, likely due to a not fully calibrated tuning phase. Indeed, if we were equipped with an oracle that returns the best parameters' setting for each instance we could achieve much better performance, as demonstrated by the statistics displayed in Table \ref{tab:comp_results_demand_best_para_each_instance}. This is valid for both the methods. For the sake of clarity, for each instance the best parameters' setting has been identified as follows:
first, we  filter out all the solutions whose objective value is greater than the smallest objective value obtained for the specific class increased by 0.1\%. From the remaining solutions, we further filter out all the solutions whose computation time is longer than the minimum computation time plus 1 second. Finally, if there is more than one solution remaining, we choose the one with the best objective function value. This table further highlight the potentials of the proposed approaches.

\begin{sidewaystable}
    \begin{center}
        \begin{tabular}{c|ccc|cccc|ccc}
\hline
& \multicolumn{3}{c}{kSP} & \multicolumn{4}{c}{kNN simulation} & 
\multicolumn{3}{c}{lwp}\\
\hline
CoI  & rel\_gap & time & \# u  & rel\_gap & time & \# u & opt\_gap  & time & \# u & opt\_gap \\
\hline
118L & 0.00\%           & \textbf{6.5}  & \textbf{0}       & 0.00\%           & 73.9      & 1             & 0.41\%     & 126.1     & 4             & 0.35\%   \\
118N    & \textbf{-0.01\%} & \textbf{12.9} & \textbf{0}      & 0.00\%           & 123.7     & 3             & 0.24\%     & 138.9     & 4             & 0.23\%   \\ 
118H & 0.03\%         &\textbf{5.2}  & 0                 & \textbf{-0.09\%} & 90.9     & 1             & 0.11\%          & 207.1     & 6             & 0.48\%   \\
300L  & \textbf{-0.01\%}           & 8.5     & 0       & 0.74\%         & \textbf{5.5} & 0           & \_         & 16.4      & 0             & \_       \\
300N    & 0.00\%           & \textbf{64.2}          & 0                    & 0.53\%           & 64.9       & 1   & 0.39\%    & 204.4     & 5     & 0.16\%   \\
300H & \textbf{-0.01\%}    & \textbf{50.5}         & 0       & 1.5\%           & 80.8  & 1           & \_         & 80.0      & 0             & \_       \\ \hline     
\end{tabular}
\caption{Computational results with ``best" parameters' setting per class of instances.}
\label{tab:comp_results_demand}

\bigskip
\begin{tabular}{c|ccc|ccc|ccc}
\hline
& \multicolumn{3}{c}{kSP} & \multicolumn{3}{c}{kNN simulation} & 
\multicolumn{3}{c}{lwp}\\
\hline
CoI  & rel\_gap & time & \# u  & rel\_gap & time & \# u   & time & \# u & opt\_gap \\
\hline
118L & \textbf{-0.01\%}     & \textbf{2.6}   & 0         & 0.00\%      & 13.5      & 0            & 126.1     & 4             & 0.35\%   \\
118N    & \textbf{-0.03\%} &\textbf{8.9}  & 0  & -0.02\% & 17.5  & 0       & 138.9     & 4             & 0.23\%   \\ 
118H & 0.02\%  & \textbf{2.9}  & 0  & \textbf{-0.10\%} & 23.1  & 0                 & 207.1     & 6             & 0.48\%   \\
300L & \textbf{-0.01\%} & \textbf{4.6}  & 0 & 0.04\%  & 7.0   & 0          & 16.4      & 0             & \_       \\
300N  & 0.02\%  & \textbf{13.5} & 0  & 0.12\%  & 105.2 & 0       & 204.4     & 5     & 0.16\%   \\
300H & \textbf{-0.01\%} & \textbf{17.1} & 0  & 0.16\%  & 32.6  & 0          & 80.0      & 0             & \_       \\ \hline     
\end{tabular}
\caption{Computational results with instance specific parameters' setting.}
\label{tab:comp_results_demand_best_para_each_instance}
    \end{center}
\end{sidewaystable}

\subsection{Computational analysis of the cardinality constraints.}

In practice, power system operators may want to limit the option of switching off branches of the grid for system’s reliability reasons. We here further analyze the impact of bounding the number of branches that can be feasibly switched-off on the computational performances of the proposed methods. Because these bounds may have considerable influence on the DC-OTS solution times, it is desirable that the proposed methods scale well with this limit. 
For this analysis, we only consider the nominal scenarios, i.e., the classes of instances  118N and 300N.
Our findings confirm the results of  \cite{fattahi2018bound, kocuk2016} about the increased computational challenge of solving the DC-OTS problem with a restricted number of edges that can be feasibly switched off. Indeed, smaller is the parameter $L$ (i.e., the number of edges that can be switched off) and more difficult are the instances to be solved.  The results are summarized in Table \ref{tab:comp_results_limits_118bus} and \ref{tab:comp_results_limits_300bus}.

kSP outperforms LWP in both quality of solutions - with a negative relative gap in seven out of eight cases across the two classes of instances considered - and  computational time. The more challenging it is the instance of the problem, the larger it is the potential benefit of using kSP. kNN finds more difficulties  in taming the complexity of the problem, although for some of the cases considered it computes good quality solution with negative relative gaps.

\begin{sidewaystable}
\bigskip\bigskip  
\smallskip
\begin{center}
\begin{tabular}{c|cccc|cccc|ccc}
\hline
& \multicolumn{4}{c}{kSP} & \multicolumn{4}{c}{kNN simulation} & 
\multicolumn{3}{c}{lwp}\\
\hline
L  & rel\_gap & time & \# u & opt\_gap & rel\_gap & time & \# u & opt\_gap & time & \# u & opt\_gap \\
\hline
10          & \textbf{0.04\%}          & \textbf{63.7}  & \textbf{1}    & 0.48\%   & -0.03\%  & 412.9     & 9             & 2.78\%    & 407.9     & 11            & 6.50\%   \\
15          & \textbf{-0.06\%} & \textbf{220.6} & \textbf{6}    & 2.26\%   & 0.03\%   & 350.1     & 10            & 5.22\%    & 345.1     & 9             & 4.06\%   \\
20          & \textbf{-0.11\%} & \textbf{65.2}  & \textbf{1}    & 0.51\%   & -0.04\%  & 275.6     & 8             & 2.99\%      & 212.5     & 6             & 3.18\%   \\
25          & \textbf{-0.04\%} & \textbf{41.3}  & \textbf{1}    & 0.26\%   & -0.01\%  & 296.2     & 8             & 2.22\%     & 177.5     & 4             & 1.99\%         \\ \hline     
\end{tabular}
\caption{Computational comparison across different values of L for the 118N class of instances.}
\label{tab:comp_results_limits_118bus}

\bigskip\bigskip
\smallskip
\begin{tabular}{c|cccc|cccc|ccc}
\hline
& \multicolumn{4}{c}{kSP} & \multicolumn{4}{c}{kNN simulation} & 
\multicolumn{3}{c}{lwp}\\
\hline
L  & rel\_gap & time & \# u & opt\_gap & rel\_gap & time & \# u & opt\_gap & time & \# u & opt\_gap \\
\hline
10 & \textbf{-0.77\%} & 600.0 & 20 & 23.3\% & -0.51\% & \textbf{573.3} & 19  & 27.00\% & 600.0 & 20 & 118\% \\
15 & \textbf{-0.16\%} & 600.0 & 20 & 6.7\%  & 0.44\%  & 600.0 & 20 & 10.43\% & 600.0 & 20 & 37\%  \\
20 & \textbf{-0.09\%} & 180.4 & 2  & 0.3\%  & 0.52\%  & 411.1 & 1  & 5.27\% & 473.5 & 12 & 29\%  \\
25 & \textbf{-0.01\%} & 86.1  & 1  & 0.1\%  & 0.53\%  & 245.1  & 0  & 0.63\% & 384.9 & 7  & 3\%  \\ \hline     
\end{tabular}
\caption{Computational comparison across different values of L for the 300N class of instances.}
\label{tab:comp_results_limits_300bus}
\end{center}
\end{sidewaystable}

\section{Conclusion and Further Work}
\label{sec::Conc}
In this paper, we present two data-driven heuristic methods to compute big M values with the purpose of tightening the mathematical programming formulation of the DC-OTS problem. Having small enough big M values, it may improve the performance of optimization solvers without scarifying the solution quality. The methods herein proposed exploit information on power demands and generation costs. The computational experience on a set of 120 instances -with different features in terms of power demand and network structure - demonstrates the viability of the proposed approaches. More in particular, the kSP method gives competitive solutions in short computational times with respect to the current approach, which is based on computing big M values with the longest-weighted path. Although, not at the same level of effectiveness, the kNN method also shows good performances especially on certain classes of instances. The relatively superior performance of kSP method can be attributed to the fact that it uses problem-specific knowledge. 
We also note that  kSP is quit robust with respect to parameters' setting, thus making this method easy to implement and resilient to demand forecasting errors. On the other hand, kNN seems more sensitive to parameters' setting. It is important to investigate strategies that provide instance specific settings to achieve the full potential of the proposed approaches. We also verified that both kSP and kNN simulation method provide better solutions - with an improvement of over $20\%$ - than those computed by the state of the art heuristic method \cite{fuller2012fast}. Moreover it will be of interest to consider a more accurate model including unit commitment constraints and multiple time periods adapting the proposed heuristics.

\section*{Acknowledgement}
We would like to thank to Prof. Adam Letchford for his comments and suggestions on a preliminary draft of the paper.

\clearpage
\bibliographystyle{plain}
\bibliography{OTS_paper}
\clearpage

\begin{appendices}

\section{Computational Results on Relative Gaps and Big-M Values}
Some information on relative gaps are shown in Table \ref{tab:rel_gap_demand}, \ref{tab:rel_gap_limits_118bus} and \ref{tab:rel_gap_limits_300bus}. "max\_gap", "min\_gap", "avg\_gap", "std\_gap" denote maximum, minimum, average and standard deviation of relative gaps, respectively. "\# non\_neg" denotes the number of instances that have a relative gap no smaller than $0.001\%$. Note that larger value indicates inferior performance.

Some information on big-M values is shown in Table \ref{tab:bigM_demand}. We select one instance in each data set and calculate the ratio between big M parameter computed by our proposed method and big M parameter computed by the longest path method for each edge in that instance. "max\_ratio", "avg\_ratio", "min\_ratio" and "std\_ratio" denote the maximum, average, minimum and standard deviation of those ratios, respectively.

\begin{sidewaystable}
\smallskip
\begin{tabular}{c|ccccc|ccccc}
\hline
& \multicolumn{5}{c}{kSP} & \multicolumn{5}{c}{kNN simulation} \\
\hline
 CoI & max\_gap & min\_gap & avg\_gap & std\_gap & \# non\_neg & max\_gap & min\_gap & avg\_gap & std\_gap & \# non\_neg \\
 \hline 
118L                                                & 0.09\%   & -0.25\%  & 0.00\%   & 0.07\%   & 12          & 0.09\%   & -0.26\%  & 0.00\%   & 0.07\%   & 9           \\
118N                                                 & 0.07\%   & -0.21\%  & -0.01\%  & 0.06\%   & 10          & 0.17\%   & -0.13\%  & 0.00\%   & 0.07\%   & 12         \\
118H                                                & 1.50\%   & -1.52\%  & 0.03\%   & 0.58\%   & 8           & 0.28\% & -1.52\% & -0.09\% & 0.36\% & 9          \\
300L                                                & 0.01\%   & -0.04\%  & -0.01\%  & 0.01\%   & 5          & 4.93\%   & -0.02\%  & 0.74\%   & 1.06\%   & 17          \\
300N                                                 & 0.05\%   & -0.07\%  & 0.00\%   & 0.03\%   & 10          &1.50\% & -0.02\% & 0.53\% & 0.47\% & 18          \\
300H                                                & 1.67\%   & -0.07\%  & 0.16\%   & 0.46\%   & 11          &6.72\% & -0.01\% & 1.52\% & 2.25\% & 17   \\
\hline
\end{tabular}
\caption{Relative Gaps under Different Power Demands}
\label{tab:rel_gap_demand}

\bigskip  

\begin{tabular}{c|ccccc|ccccc}
\hline
& \multicolumn{5}{c}{kSP} & \multicolumn{5}{c}{kNN simulation} \\
\hline
 \# off & max\_gap & min\_gap & avg\_gap & std\_gap & \# non\_neg & max\_gap & min\_gap & avg\_gap & std\_gap & \# non\_neg \\
 \hline
10      & 0.04\%   & -0.56\%  & -0.04\%  & 0.14\%   & 7         &0.08\% & -0.32\% & -0.03\% & 0.10\% & 1           \\
15                                                    & 0.32\%   & -0.50\%  & -0.06\%  & 0.21\%   & 8           & 0.77\% & -0.36\% & 0.03\%  & 0.24\% & 11           \\
20                                                    & 0.04\%   & -0.81\%  & -0.11\%  & 0.20\%   & 3          & 0.34\% & -0.73\% & -0.04\% & 0.23\% & 7            \\
25                                                    & 0.09\%   & -0.53\%  & -0.04\%  & 0.15\%   & 6           & 0.20\% & -0.32\% & -0.01\% & 0.13\% & 12    \\ \hline    
\end{tabular}
\caption{Relative Gaps on Data Set 118N with Different Limits on Number of Switched-off Edges}
\label{tab:rel_gap_limits_118bus}

\bigskip  

\begin{tabular}{c|ccccc|ccccc}
\hline
& \multicolumn{5}{c}{kSP} & \multicolumn{5}{c}{kNN simulation} \\
\hline
 \# off & max\_gap & min\_gap & avg\_gap & std\_gap & \# non\_neg & max\_gap & min\_gap & avg\_gap & std\_gap & \# non\_neg \\
 \hline
10                                                    & 0.00\%   & -5.31\%  & -0.77\%  & 1.50\%   & 0           &4.24\% & -5.31\% & -0.51\% & 1.88\% & 2           \\
15                                                    & 0.09\%   & -1.02\%  & -0.16\%  & 0.24\%   & 2           & 1.56\% & -0.71\% & 0.44\%  & 0.54\% & 15          \\
20                                                    & 0.03\%   & -0.74\%  & -0.09\%  & 0.17\%   & 5           & 1.43\% & -0.63\% & 0.52\%  & 0.55\% & 18          \\
25                                                    & 0.08\%   & -0.10\%  & -0.01\%  & 0.04\%   & 6           & 1.43\% & -0.03\% & 0.53\%  & 0.46\% & 19             \\ \hline    
\end{tabular}
\caption{Relative Gaps on Data Set 300N with Different Limits on Number of Switched-off Edges}
\label{tab:rel_gap_limits_300bus}

\bigskip

\begin{tabular}{c|cccc|cccc}
\hline
& \multicolumn{4}{c}{kSP} & \multicolumn{4}{c}{kNN simulation} \\
\hline
 CoI & max\_ratio & avg\_ratio & min\_ratio & std\_ratio & max\_ratio & avg\_ratio & min\_ratio & std\_ratio \\
 \hline
118L                                             & 100.00\%   & 24.38\%    & 1.91\%     & 33.65\%    & 100.00\%   & 31.92\%    & 0.26\%     & 31.67\%    \\
118N                                          & 100.00\%   & 24.30\%    & 1.91\%     & 33.70\%    & 100.00\%   & 32.09\%    & 0.28\%     & 31.85\%    \\
118H                                            & 100.00\%   & 24.20\%    & 1.50\%     & 33.76\%    & 100.00\%   & 37.89\%    & 0.18\%     & 41.58\%    \\
300L                                            & 100.00\%   & 42.80\%    & 0.20\%     & 45.09\%    & 100.00\%   & 27.29\%    & 0.03\%     & 38.99\%    \\
300N                                      & 100.00\%   & 42.15\%    & 1.11\%     & 45.21\%    & 100.00\%   & 75.80\%    & 0.01\%     & 41.35\%    \\
300H                                         & 100.00\%   & 41.98\%    & 1.11\%     & 45.22\%    & 100.00\%   & 81.57\%    & 0.003\%    & 37.27\%      \\
\hline
\end{tabular}
\caption{Big-M Values under Different Power Demands}
\label{tab:bigM_demand}
\end{sidewaystable}

\end{appendices}

\end{document}